\newtheorem{theorem}{Theorem}[section]
\newtheorem{conjecture}[theorem]{Conjecture}
\newtheorem{lemma}[theorem]{Lemma}
\newtheorem{claim}{Claim}
\title{Some bounds on the uniquely restricted matching number}
\author{M. F\"{u}rst  \and D. Rautenbach}
\date{}
\begin{document}
\onehalfspace
\maketitle
\begin{center}
{\small 
Institute of Optimization and Operations Research, Ulm University, Germany\\
\texttt{maximilian.fuerst,dieter.rautenbach@uni-ulm.de}\\[3mm]
}
\end{center}

\begin{abstract}
A matching in a graph is uniquely restricted if no other matching covers exactly the same set of vertices.
We establish tight lower bounds on the maximum size of a uniquely restricted matching in terms of order, size, and maximum degree.
\end{abstract}
 
{\small 
\begin{tabular}{lp{13cm}}
{\bf Keywords:} matching; uniquely restricted matching; acyclic matching
\end{tabular}
}

\section{Introduction}
We consider only simple, finite, and undirected graphs, and use standard terminology.
A {\it matching} \cite{lovasz2009matching} in a graph $G$ is a set of pairwise non-adjacent edges of $G$.
A matching $M$ in $G$ is {\it uniquely restricted} \cite{golumbic2001uniquely} 
if no other matching in $G$ covers exactly the same set of vertices,
and a matching $M$ in $G$ is {\it acyclic} \cite{goddard2005generalized} 
if the subgraph of $G$ induced by the set of vertices of $G$ covered by $M$ is a forest.
The maximum sizes of a matching, a uniquely restricted matching, and an acyclic matching in $G$ 
are denoted by $\nu(G)$, $\nu_{ur}(G)$ and $\nu_{ac}(G)$, respectively.

While unrestricted matchings are tractable \cite{lovasz2009matching},
uniquely restricted and acyclic matchings are both NP-hard even for restricted instances \cite{golumbic2001uniquely, goddard2005generalized},
which motivates the search for tight lower bounds. 
In the present note, 
we prove lower bounds on the uniquely restricted matching number for graphs of bounded maximum degree,
and present improvements for graphs without short cycles.
Similar work has been done for unrestricted, induced, and acyclic matchings 
in general graphs \cite{joos2014induced, joos2016induced, furst2017lower, henning2016tight, henning2007tight, biedl2004tight}
as well as in graphs of large girth \cite{henning2014induced, costa2013matchings, flaxman2007maximum}.
After collecting some notation and known results,
we discuss our contributions. All proofs are postponed to the next section. 
Throughout this section, let $G$ be a graph with vertex set $V(G)$, edge set $E(G)$, and maximum degree at most $\Delta$.
Let $n(G)$, $m(G)$, and $c(G)$ be the order, size, and number of connected components of $G$, respectively. 
The girth of $G$ is the minimum length of a cycle in $G$.
For an integer $k$, let $[k]$ be the set of positive integers at most $k$.

Golumbic et al.~\cite{golumbic2001uniquely} observed that a matching $M$ in $G$ is uniquely restricted 
if and only if there is no $M$-alternating cycle in $G$,
which implies $\nu_{ur}(G) \geq \nu_{ac}(G)$. 
Baste et al.~\cite{baste2017degenerate} showed that the acyclic chromatic index of $G$, 
defined as the minimum number of acyclic matchings into which the edge set of $G$ can be partitioned,
is at most $\Delta^2$. 
Hence, we obtain 
$$\nu_{ur}(G) \geq \nu_{ac}(G) \geq \frac{m(G)}{\Delta^2},$$ 
which is tight for $K_{\Delta,\Delta}$.

As our first result we prove the following.

\begin{theorem} \label{t1}
If $G$ is a graph of maximum degree at most $\Delta$ without isolated vertices, then
\begin{displaymath}
\nu_{ur}(G) \geq \frac{n(G)}{\Delta} - \frac{m(G)}{\Delta^2}
\end{displaymath}
with equality if and only if each component of $G$ is in $\{ K_{\Delta,r}:r\in [\Delta]\}$.
\end{theorem}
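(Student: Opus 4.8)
The plan is to prove the inequality by induction on $n(G)+m(G)$ using a single local reduction, and to read the equality cases off the slack in that reduction. Since $\nu_{ur}$, $n(G)$, and $m(G)$ are all additive over connected components, I will also use that the claimed bound and its equality case decompose componentwise; in particular the converse direction reduces to the direct check that each $K_{\Delta,r}$ has $\nu_{ur}=1$ and makes the bound an equality.

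The reduction is as follows. Pick a vertex $v$ of minimum degree (so $\deg(v)\ge 1$, as $G$ has no isolated vertex), fix a neighbor $u$, put the single edge $uv$ into the matching, and recurse on the graph $G''$ obtained from $G$ by deleting the closed neighborhood $N[v]$ together with every vertex that becomes isolated. The point is that $uv$ can be added safely: since all neighbors of $v$ lie in $N[v]$ and are deleted, $v$ has no neighbor among the vertices covered by a maximum uniquely restricted matching $M''$ of $G''$. By the characterization that a matching is uniquely restricted exactly when it admits no alternating cycle, any $M''\cup\{uv\}$-alternating cycle would either pass through $uv$ and hence leave $v$ along a second edge (which does not exist) or avoid $uv$ and thus live inside the induced subgraph $G''$ (contradicting that $M''$ is uniquely restricted). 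Hence $\nu_{ur}(G)\ge \nu_{ur}(G'')+1$.

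Next I would set up the counting. Writing $d=\deg(v)$, letting $j$ be the number of vertices made isolated in $G-N[v]$, and letting $E_v$ be the number of edges incident with $N[v]$, the induction hypothesis applied to $G''$ reduces the target inequality to
\[
\Delta^2-(1+d+j)\Delta+E_v\ge 0.
\]
Two structural facts drive this. First, each newly isolated vertex $w$ has $N(w)\subseteq N(v)$ and degree at least $\delta(G)=d=|N(v)|$, which forces $N(w)=N(v)$; listing these vertices as $w_1,\dots,w_j$, this yields $E_v\ge d(1+j)$. Second, every vertex of $N(v)$ is then adjacent to $v$ and to all of $w_1,\dots,w_j$, so it has at least $1+j$ neighbors, and $\deg(\cdot)\le\Delta$ gives $j\le\Delta-1$. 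Substituting $E_v\ge d(1+j)$ turns the left-hand side into $(\Delta-d)(\Delta-1-j)$, which is nonnegative since $d\le\Delta$ and $j\le\Delta-1$. I expect the unique-restrictedness verification above and the bookkeeping of $E_v$ and $j$ to be the only technical points, and both are routine once these observations are in place.

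The main obstacle is the equality analysis. Equality in the theorem forces $\nu_{ur}(G)=\nu_{ur}(G'')+1$, equality in the induction hypothesis for $G''$ (so each component of $G''$ is a $K_{\Delta,r}$), and both $(\Delta-d)(\Delta-1-j)=0$ and $E_v=d(1+j)$. The latter says there are no edges inside $N(v)$ and no edges from $N(v)$ to any vertex other than $v,w_1,\dots,w_j$, so the component of $v$ is exactly the complete bipartite graph between $\{v,w_1,\dots,w_j\}$ and $N(v)$. Using that $d=\delta(G)$ is the minimum degree, together with $(\Delta-d)(\Delta-1-j)=0$, then pins the larger side to size $\Delta$, identifying this component as $K_{\Delta,r}$ with $r=d\in[\Delta]$; removing it leaves the remaining components, handled by induction. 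The delicate part is precisely forcing the complete bipartite structure from the two tightness conditions and confirming that choosing $v$ of minimum degree is what makes the large side attain the maximum degree $\Delta$.
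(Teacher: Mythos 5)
Your argument is correct and is essentially the paper's proof: both select a minimum-degree vertex $v$, delete its closed neighborhood together with the newly isolated vertices (which are forced to be completely joined to $N(v)$), add one edge at $v$ to a uniquely restricted matching of the remainder, and obtain the bound from the same two estimates ($E_v\geq d(1+j)$ and $j\leq \Delta-1$), whose joint tightness yields the $K_{\Delta,r}$ characterization. The differences are cosmetic only: induction on $n+m$ versus a minimum counterexample, and your $E_v$ including the $d$ edges at $v$ while the paper's $E_u$ excludes them.
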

In view of the extremal graphs for Theorem \ref{t1},
taking the number of components into account should allow better bounds.
For the subcubic case, that is, for $\Delta=3$, 
we achieve the following best possible result.

\begin{theorem} \label{t2}
If $G$ is a subcubic graph, then
\begin{eqnarray}
\nu_{ur}(G) \geq \frac{n(G) - c(G)}{2} - \frac{m(G)}{6}.\label{et2}
\end{eqnarray}
\end{theorem}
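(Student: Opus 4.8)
The plan is to prove the bound by induction on $n(G)$, exploiting that both sides behave additively over the components of $G$. Writing $f(G)=\frac{n(G)-c(G)}{2}-\frac{m(G)}{6}$, one checks that $\nu_{ur}$, $n$, $m$, and $c$ are all additive over connected components, so $f$ is additive and it suffices to prove $\nu_{ur}(G)\ge f(G)$ for connected $G$; isolated vertices contribute $0$ to both sides, and small graphs such as $K_1$ and $K_2$ serve as base cases. For connected $G$ I will repeatedly remove a small vertex set $S$, add $t$ of the deleted edges to the matching, and recurse on $G-S$. Using $\nu_{ur}(G)\ge\nu_{ur}(G-S)+t\ge f(G-S)+t$ and $c(G)=1$, the induction step goes through exactly when
\[
e_S+6t\ \ge\ 3|S|+3\bigl(c(G-S)-1\bigr),
\]
where $e_S$ is the number of edges meeting $S$. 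Thus at every step I must control both the increase $c(G-S)-1$ in the number of components and the requirement that the enlarged matching remain uniquely restricted.

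First I would dispose of vertices of degree at most $1$, which lets me assume $\delta(G)\ge 2$. If $v$ is a leaf with neighbour $u$, I take a maximum uniquely restricted matching $M'$ of $G-u-v$ and add the edge $uv$. Since $v$ has degree $1$ it lies on no cycle, and the only matching edge at $u$ runs into the dead-end $v$, so $M'\cup\{uv\}$ admits no alternating cycle and is uniquely restricted by the criterion of Golumbic et al. Here $e_S=\deg(u)$, and deleting the leaf $v$ keeps $G$ connected while deleting $u$ afterwards raises the component count by at most $\deg(u)-1$, so $c(G-S)-1\le\deg(u)-2$. The displayed inequality then reads $\deg(u)\ge 3(c(G-S)-1)$, which holds since $3(c(G-S)-1)\le 3\deg(u)-6\le\deg(u)$ precisely because $\Delta\le 3$.

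The remaining, and genuinely harder, situation is $\delta(G)\ge 2$, where no pendant edge is available for free. For a vertex $v$ of degree $2$ with neighbours $a,b$ I would match one incident edge, say $va$, and delete a bounded set containing $v$ and $a$; the key point is that any alternating cycle created by $va$ must leave $v$ through its unique other edge $vb$, so controlling $b$ destroys it. When $v$ is a cut vertex this happens automatically, since $v$ then lies on no cycle; when $G-v$ is connected one instead bounds $c(G-v-a)\le\deg(a)-1\le 2$ and verifies the inequality directly, with a short sub-analysis according to whether $a$ and $b$ are adjacent (triangles). The main obstacle is the \emph{cubic} case, where $G$ is $3$-regular and no low-degree vertex exists: every candidate edge $uv$ has both endpoints of degree $3$, so matching it may create an alternating cycle routed through the other neighbours of $u$ and $v$, and deleting vertices threatens to disconnect $G$. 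Here I expect to select the edge $uv$ together with a deletion set $S$ so that, after recursing on $G-S$, all potential alternating cycles through $uv$ are provably broken while $c(G-S)-1$ stays within the budget $\tfrac{1}{3}\bigl(e_S+6t-3|S|\bigr)$. Reconciling these two demands simultaneously — uniqueness of restriction against connectivity — is the crux of the argument, and it is exactly where the tight examples $K_{3,3}$ and $K_{3,2}$ limit how aggressively one may delete.
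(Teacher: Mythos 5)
Your inductive framework is sound and matches the paper's bookkeeping: with $f(G)=\frac{n(G)-c(G)}{2}-\frac{m(G)}{6}$, additivity over components, and the step inequality $e_S+6t\ge 3|S|+3(c(G-S)-1)$, you have correctly set up the reduction, and your treatment of vertices of degree at most $1$ is essentially the paper's first claim. But the proposal stops exactly where the proof begins to be hard, and in two places the gap is genuine. First, the degree-$2$ analysis is far more delicate than ``a short sub-analysis according to whether $a$ and $b$ are adjacent'': the paper needs four successive structural claims (no two adjacent degree-$2$ vertices, no degree-$2$ vertex in a triangle or a $4$-cycle, and that $G-N_G[u]$ has exactly four components for every degree-$2$ vertex $u$), and to keep $c(G-S)-1$ within budget it resorts to non-deletion moves you never mention, namely contracting a cycle (or a cycle together with a whole component) to a single vertex, and in the final degree-$2$ reduction adding a new edge $wx$ to $G-\{u,v\}$ and then repairing the matching depending on whether $wx$ was used. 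Pure vertex deletion does not suffice here precisely because deleting $N_G[u]$ can create up to four components, which blows the budget.

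Second, and more seriously, you explicitly leave the cubic case open (``I expect to select the edge $uv$ together with a deletion set $S$ so that \dots''). The paper does \emph{not} resolve this case by any local surgery at all: once $G$ is shown to be $3$-regular, it invokes the external result $\nu_{ur}(G)\ge\nu_{ac}(G)\ge\frac{n(G)-2}{4}$ for connected subcubic graphs (from the authors' earlier work on acyclic matchings), and observes that for cubic $G$ one has $m=\frac{3n}{2}$, so $\frac{n-1}{2}-\frac{m}{6}=\frac{n-2}{4}$, yielding the contradiction. Since $K_{3,3}$ is cubic, connected, and tight for the bound (with $\nu_{ur}(K_{3,3})=1$), any local edge-selection argument in the regular case has no slack whatsoever, and it is not clear one exists; as written, your proposal is an honest plan with the decisive case missing rather than a proof.
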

If $G$ is a tree, then (\ref{et2}) simplifies to $\nu_{ur}(G) \geq \frac{n-1}{3}$, 
which is tight for infinitely many trees \cite{henning2016tight}. 
Moreover, 
if the subcubic graph $G_k$
arises from the disjoint union of $k$ isolated vertices $u(1),\ldots,u(k)$ and $2k+1$ copies $K(1),\ldots,K(2k+1)$ of $K_{2,3}$
by adding, for every $i\in [k]$,
one edge between $u(i)$ and $K(2i-1)$,
one edge between $u(i)$ and $K(2i)$, and
one edge between $u(i)$ and $K(2i+1)$,
then 
$$\nu_{ur}(G_k) =  3k+1 = \frac{11k+4}{2} - \frac{15k+6}{6} = \frac{n(G)-1}{2} - \frac{m(G)}{6},$$
that is, (\ref{et2}) is tight also for denser graphs.

If $G$ has girth at least $5$ and no isolated vertices, 
then the approach used in the proof of Theorem \ref{t1} yields $\nu_{ur}(G) \geq \frac{n(G)}{\Delta +1}$.
For $\Delta \geq 4$, we improve this as follows.

\begin{theorem} \label{t3}
If $G$ be a graph of maximum degree at most $\Delta$ for some $\Delta \geq 4$ and girth at least $5$,
then
\begin{eqnarray}
\nu_{ur}(G) \geq \frac{n(G)-c(G)}{\Delta}. \label{et3}
\end{eqnarray}
\end{theorem}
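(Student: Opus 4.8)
The plan is to reduce to connected graphs and then set up an induction that peels off one matching edge together with a closed neighbourhood. First I would observe that $\nu_{ur}$ is additive over the connected components of $G$ (a uniquely restricted matching is the disjoint union of uniquely restricted matchings of the components, because an alternating cycle lies inside a single component), while $n(G)-c(G)=\sum_i (n_i-1)$ over the components. Hence the inequality (\ref{et3}) follows from its one-component version, and it suffices to prove that every \emph{connected} graph $G$ of maximum degree at most $\Delta$ and girth at least $5$ satisfies $\nu_{ur}(G)\ge \frac{n(G)-1}{\Delta}$; isolated vertices contribute $0$ to both sides and are absorbed by the reduction.

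The inductive engine is the following preservation lemma, whose proof does not even use the girth: if $u$ is a vertex, $v\in N(u)$, and $M'$ is a uniquely restricted matching of $G-N[u]$, then $M'\cup\{uv\}$ is uniquely restricted in $G$. Indeed, any alternating cycle avoiding $uv$ has all its matching edges in $M'$, so all its vertices are $M'$-covered and it is an induced $M'$-alternating cycle in $G-N[u]$, contradicting the choice of $M'$; and any alternating cycle through $uv$ must leave $u$ along a non-matching edge $uw$ with $w\in N(u)$, but every such $w$ has been deleted and is therefore exposed, which is impossible on an alternating cycle. Applying this to an edge $uv$ and recursing on the $p$ components of $G-N[u]$ yields $\nu_{ur}(G)\ge 1+\frac{n(G)-1-\deg(u)-p}{\Delta}$, so the induction closes exactly when one can choose $u$ with $\deg(u)+p\le\Delta$.

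The easy case is a vertex $u$ of degree $1$: then $G-N[u]=G-\{u,v\}$ arises from the connected graph $G-u$ by deleting the single vertex $v$, whose degree there is at most $\Delta-1$, so $p\le\Delta-1$ and $\deg(u)+p\le\Delta$. The main obstacle is the minimum-degree-at-least-$2$ case, and especially near-regular graphs of girth $5$: when $\deg(u)=\Delta$ the requirement degenerates to $p\le 0$, which no connected graph on more than $\Delta+1$ vertices can meet, so the naive one-edge induction is provably too weak there. This is precisely where the hypotheses girth $\ge 5$ and $\Delta\ge 4$ must enter. My plan for this core case is to select $u$ as a peripheral vertex—an endpoint of a longest path, or a deepest vertex of a breadth-first layering—and to exploit that girth $\ge 5$ forces the neighbours of $u$ to be pairwise non-adjacent with no common neighbour other than $u$, so that the edges leaving $N[u]$ are highly structured; one then bounds the number of components they can reach, trading $\deg(u)$ against $p$.

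I expect the decisive difficulty to be making this trade tight enough to reach the exact constant $\frac{1}{\Delta}$ (rather than the weaker $\frac{1}{\Delta+1}$ coming from deleting $N[u]$ blindly). I anticipate that closing it will require either strengthening the inductive hypothesis, or replacing $N[u]$ by a slightly larger and more carefully chosen separating set whose deletion removes an entire component or manufactures enough components to pay for a high-degree $u$; an auxiliary structural analysis of a maximal uniquely restricted matching, using girth $\ge 5$ to control the alternating paths that block uncovered vertices, is the natural backup. The routine parts—additivity, the preservation lemma, and the degree-one case—are straightforward, and essentially all the genuine work lies in this component-counting estimate for dense, girth-$5$ graphs.
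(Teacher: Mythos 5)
Your reductions (additivity over components, the preservation lemma for $M'\cup\{uv\}$ with $M'$ a uniquely restricted matching of $G-N_G[u]$, and the degree-one case) are all correct, and you have correctly located the obstruction: a one-edge induction on $c(G)$ cannot close when $\deg(u)+p>\Delta$, which happens for every vertex of a $\Delta$-regular or near-regular girth-$5$ graph. But at exactly that point your proposal stops being a proof. The core case is left as a list of candidate strategies (``peripheral vertex'', ``strengthening the inductive hypothesis'', ``a more carefully chosen separating set''), none of which is carried out, and you yourself note that ``essentially all the genuine work lies in this component-counting estimate.'' That estimate is the theorem; without it the argument is a plan, not a proof.

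For comparison, the paper resolves this case not by a sharper component count but by changing the potential function: Lemma~\ref{t4} proves $\nu_{ur}(G)\geq\frac{n(G)-n_{\leq 1}(G)}{\Delta}$ for graphs with no $\Delta$-regular component, where $n_{\leq 1}(G)$ counts vertices of degree at most $1$ rather than components. With this invariant one first forces minimum degree at least $2$ (deleting $\{u,v\}$ creates at most $\Delta-2$ new low-degree vertices net), then minimum degree at least $3$ by deleting a \emph{maximal path} whose internal vertices alternate with degree-$2$ vertices --- here girth at least $5$ and $\Delta\geq 4$ are used to bound the new low-degree vertices by $1+(\Delta-2)(k-2)$ --- and finally deletes $N_G[u]$ for a vertex of minimum degree $\delta\leq\Delta-1$, where minimum degree $3$ and girth $5$ guarantee $n_{\leq 1}(G-N_G[u])=0$, so the induction closes with room to spare. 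Theorem~\ref{t3} then follows by handling degree-one vertices directly and, for $\Delta$-regular connected $G$, deleting an endvertex of a spanning tree to obtain a connected non-regular graph to which the lemma applies. So your instinct that the inductive hypothesis must be strengthened was right, but the specific strengthening (tracking $n_{\leq 1}$ instead of $c$) and the maximal-path deletion are the missing ideas, and they are not routine to reconstruct from what you have written.
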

Again, (\ref{et3}) is tight for infinitely many trees \cite{henning2016tight}
but we believe that it still holds if triangles are allowed and if $\Delta\geq 3$. 
It is easy to see that every matching can be partitioned into at most $\Delta$ acyclic matchings, 
which implies
$\nu_{ur}(G) \geq \nu_{ac}(G) \geq \frac{\nu(G)}{\Delta}.$
Baste et al.~\cite{baste2017uniquely} showed that every matching in a bipartite graph with maximum degree $\Delta \geq 4$
not isomorphic to $K_{\Delta,\Delta}$ 
can be partitioned into $\Delta-1$ uniquely restricted matchings,
that is, $\nu_{ur}(G) \geq \frac{\nu(G)}{\Delta-1}$ in this case.
We conjecture that every matching in a graph of maximum degree at most 
$\Delta$ for some $\Delta \geq 3$ and girth at least $7$ 
can be partitioned into at most $\Delta-1$ acyclic matchings. 

A simple counting argument implies that $\nu_{ac}(G) \leq \frac{\Delta n(G)-2}{4\Delta-4}$ for every $\Delta$-regular graph $G$,
that is, regardless of any girth condition, the acyclic matching number can be a factor of about $2$ away from the matching number.
We believe that, at least asymptotically for large girth, 
the uniquely restricted matching number behaves more like the matching number 
than like the acyclic matching number.
More precisely, we conjecture the following.

\begin{conjecture} \label{conjecture2}
For every positive $\epsilon$ and every positive integer $\Delta$, 
there is some positive integer $g=g(\epsilon,\Delta)$, 
such that 
$\nu_{ur}(G)\geq (1-\epsilon)\nu(G)$
for every graph $G$ of maximum degree at most $\Delta$ and girth at least $g$.
\end{conjecture}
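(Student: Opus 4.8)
The plan is to reduce the conjecture to a problem about destroying alternating cycles in a fixed maximum matching. Let $M$ be a maximum matching of $G$, so that $|M|=\nu(G)$, and recall Golumbic et al.'s characterization that a matching is uniquely restricted if and only if it admits no alternating cycle. The first step is a deletion lemma: if $F\subseteq M$ meets the set of matching edges of every $M$-alternating cycle, then $M\setminus F$ is uniquely restricted. Indeed, no edge $e\in F$ can lie on an $(M\setminus F)$-alternating cycle $C$, since the two neighbours of $e$ on $C$ would be edges of $M\setminus F\subseteq M$ sharing an endpoint with $e\in M$, contradicting that $M$ is a matching; hence every $(M\setminus F)$-alternating cycle is already an $M$-alternating cycle and is therefore hit by $F$. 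Consequently $\nu_{ur}(G)\ge \nu(G)-|F|$, and it suffices to produce such an $F$ with $|F|\le \epsilon\,\nu(G)$.

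The second step uses the girth to control the alternating cycles. Every $M$-alternating cycle is a cycle of $G$, so it has length at least $g$ and thus contains at least $g/2$ edges of $M$. This bounds the fractional hitting number of the alternating-cycle hypergraph on the vertex set $M$: any fractional packing $(y_C)$ with $\sum_{C\ni e}y_C\le 1$ for each matching edge $e$ satisfies $\tfrac{g}{2}\sum_C y_C\le \sum_C y_C\,|C\cap M|=\sum_{e\in M}\sum_{C\ni e}y_C\le |M|$, so the packing has weight at most $2\nu(G)/g$. By LP duality the minimum fractional hitting set has size at most $2\nu(G)/g$, and choosing $g\ge 2/\epsilon$ already proves the conjectured bound for the fractional relaxation, with $g$ depending only on $\epsilon$.

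The main obstacle is to round this fractional hitting set to an integral $F$ of size $o(\nu(G))$ while keeping $g$ independent of $n(G)$. The naive approach — delete each matching edge independently with probability $p$ — already fails to reach the conjectured bound, and the failure is instructive. Since the matching edges on an alternating cycle are forced by their non-matching neighbours, the number of alternating cycles of length $2\ell$ through a fixed matching edge grows like $(\Delta-1)^{\ell}$, while each survives the deletion with probability $(1-p)^{\ell}$, so the expected number of surviving cycles is governed by the factor $\big((\Delta-1)(1-p)\big)^{\ell}$. Driving this below $1$ forces $p\ge(\Delta-2)/(\Delta-1)$, leaving only an $\Omega(1/\Delta)$-fraction of $M$; independent deletion therefore recovers no more than the known bound $\nu_{ur}(G)\ge\nu(G)/\Delta$. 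The same exponential abundance of long alternating cycles inflates the dependency degree in the Lov\'{a}sz Local Lemma and blocks that route as well.

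To reach survival fraction $1-\epsilon$ one must exploit the girth beyond the mere length bound. I would pursue a correlated, staged deletion — a semi-random (nibble) scheme, or a Moser--Tardos/entropy-compression argument with a sparsified dependency structure — in which alternating cycles are resolved locally within the radius-$\lfloor g/2\rfloor$ neighbourhood of each matching edge, which the girth forces to be a tree, and in which the endpoints freed by each deletion are recycled so that the cumulative loss telescopes. Showing that these total deletions stay below $\epsilon\,\nu(G)$ with $g$ a function of $\epsilon$ and $\Delta$ alone is the heart of the matter, and is presumably why the statement is posed only as a conjecture.
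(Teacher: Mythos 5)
This statement is Conjecture \ref{conjecture2}; the paper offers no proof of it, so the only question is whether your argument actually establishes it, and it does not. Your first two steps are correct: if $F\subseteq M$ meets the $M$-edges of every $M$-alternating cycle, then $M\setminus F$ is uniquely restricted (the adjacency argument showing no edge of $F$ can appear on an $(M\setminus F)$-alternating cycle is sound), and assigning weight $\tfrac{2}{g}$ to each edge of $M$ gives a fractional hitting set of weight $\tfrac{2\nu(G)}{g}$, since a girth-$g$ cycle carries at least $\tfrac{g}{2}$ matching edges. But the conjecture asks for an integral conclusion, and the entire content lies in the rounding step, which you explicitly leave open. The alternating-cycle hypergraph has exponentially many hyperedges of unbounded size, so no generic integrality-gap bound applies; as you yourself compute, independent deletion only recovers the already-known $\nu_{ur}(G)\geq\nu(G)/\Delta$, and the Local Lemma is blocked by the dependency degree. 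A sketched hope for a nibble or entropy-compression scheme is not a proof.

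Two further cautions. First, your reduction fixes one maximum matching $M$ and searches only among its subsets; the conjecture quantifies over all uniquely restricted matchings, and it is conceivable that the hitting-set version (``some maximum matching admits a small transversal of its alternating cycles'') is strictly harder than, or even false while, the conjecture holds. You should at least flag that this reduction may lose more than an $\epsilon$-fraction. Second, the heuristic count of $(\Delta-1)^{\ell}$ alternating cycles of length $2\ell$ through a fixed matching edge is an upper bound on a quantity you never actually need to control globally, so the ``instructive failure'' paragraph, while reasonable motivation, does not substitute for an argument. In short: steps one and two are a correct and sensible reduction, but the statement remains exactly as open after your proposal as before it.
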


\section{Proofs}

In this section, we present the proofs of our theorems 
as well as some auxiliary results.

\begin{proof}[Proof of Theorem \ref{t1}]
Suppose, for a contradiction, that $G$ is a counterexample of minimum order,
that is, 
either $\nu_{ur}(G)<\frac{n(G)}{\Delta} - \frac{m(G)}{\Delta^2}$
or $\nu_{ur}(G)=\frac{n(G)}{\Delta} - \frac{m(G)}{\Delta^2}$
but not all components of $G$ are in $\{ K_{\Delta,r}:r\in [\Delta]\}$.
Let $u$ be a vertex of minimum degree $\delta$.
Let $G^\prime = G - N_G[u]$.
Let $I$ be the set of isolated vertices of $G^\prime$.
Let $E_u$ be the set of edge of $G-u$ that are incident with a vertex in $N_G(u)$.
By the choice of $u$, 
every vertex in $I$ has degree $\delta$ and is completely joined to $N_G(u)$.
Since every vertex in $N_G(u)$ has degree at most $\Delta-1$ in $G'$, we obtain
\begin{eqnarray}\label{e4}
 \delta |I| \leq |E_u| \leq \delta(\Delta -1).
\end{eqnarray}
By the choice of $G$,
and since $M'\cup \{ uv\}$ is a uniquely restricted matching in $G$
for every uniquely restricted matching $M'$ in $G'$ and every neighbor $v$ of $u$,
we obtain
\begin{eqnarray}
  \nu_{ur}(G) &\geq& \nu_{ur}(G^\prime - I) +1\nonumber \\ 
  &\geq& \frac{n(G^\prime - I)}{\Delta} - \frac{m(G^\prime - I)}{\Delta^2} +1 \label{e5}\\
  &=& \frac{1}{\Delta}\Big(n(G) - |I| - \delta - 1\Big) - \frac{1}{\Delta^2}\Big(m(G) - |E_u| - \delta\Big) + 1 \nonumber \\
  &\stackrel{(\ref{e4})}{\geq} & \frac{1}{\Delta}\left(n(G) - \frac{|E_u|}{\delta} - \delta - 1\right) - \frac{1}{\Delta^2}\Big(m(G) - |E_u| - \delta\Big) + 1 \label{e6} \\
  &= &\frac{n(G)}{\Delta} - \frac{m(G)}{\Delta^2} + \frac{\Delta - \delta}{\Delta^2} \left( (\Delta - 1) - \frac{|E_u|}{\delta}\right)\nonumber \\
  &\stackrel{(\ref{e4})}{\geq} & \frac{n(G)}{\Delta} - \frac{m(G)}{\Delta^2}. \label{e7}
\end{eqnarray}
By the choice of $G$, equality holds in (\ref{e5}), (\ref{e6}), and (\ref{e7}).
Equality in (\ref{e5}) implies that every component of $G'-I$ is in $\{ K_{\Delta,r}:r\in [\Delta]\}$.
Equality in (\ref{e7}) implies that $N_G(u)$ is independent, and that every neighbor of $u$ has degree $\Delta$ in $G$.
Finally, equality in (\ref{e6}) implies that $N_G[u]\cup I$ is the vertex set of some component of $G$ that is in $\{ K_{\Delta,r}:r\in [\Delta]\}$.
Altogether, we obtain the contradiction that all components of $G$ are in $\{ K_{\Delta,r}:r\in [\Delta]\}$,
which completes the proof.
\end{proof}
The main result in \cite{furst2017lower} implies 
\begin{eqnarray}
\nu_{ur}(G) \geq \nu_{ac}(G)\geq \frac{n(G)-2}{4}\label{e7}
\end{eqnarray}
for every connected subcubic graph $G$.
We shall use this within the following proof.

\begin{proof}[Proof of Theorem \ref{t2}]
Let $G$ be a counterexample of minimum order $n$. 
Clearly, $G$ is connected and $n\geq 4$. 
Let $m=m(G)$ and $c=c(G)$.
We establish a series of claims.

\begin{claim} \label{c1}
The minimum degree of $G$ is at least $2$.
\end{claim}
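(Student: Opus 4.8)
The plan is to argue by contradiction using the minimality of $G$. Suppose $G$ is a connected subcubic counterexample of minimum order with a vertex $u$ of degree at most $1$. The strategy is to delete $u$ (together with the edge incident to it, if any) and analyze the resulting graph $G' = G - u$, then recover a large uniquely restricted matching in $G$ from one in $G'$, contradicting minimality.

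First I would handle the case $\deg_G(u) = 0$: an isolated vertex cannot occur because $G$ is connected with $n \geq 4$, so this is immediate. The main case is $\deg_G(u) = 1$, say with unique neighbor $v$. I would set $G' = G - u$, which is a subcubic graph on $n - 1$ vertices with $m - 1$ edges. The key parameters shift as $n(G') = n - 1$, $m(G') = m - 1$, and $c(G')$ is either $c = 1$ (if $v$ still has a neighbor in $G'$, keeping the component connected) or the removal could in principle disconnect things, though since only a pendant edge is deleted, $G'$ remains connected unless $v$ itself became isolated. I would track $c(G')$ carefully: if $G'$ is connected then $c(G') = 1$, and if $v$ becomes isolated in $G'$ then I split off that trivial component.

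The crucial observation is that any uniquely restricted matching $M'$ in $G'$ that does not cover $v$ can be extended to $M' \cup \{uv\}$ in $G$, and this remains uniquely restricted because adding a pendant edge creates no new alternating cycle. If $M'$ does cover $v$, I would instead argue that $M'$ itself, viewed in $G$, is already uniquely restricted and leaves $u$ uncovered, so I need a size comparison. Applying the minimality bound to $G'$ gives $\nu_{ur}(G') \geq \frac{n(G') - c(G')}{2} - \frac{m(G')}{6} = \frac{(n-1) - c(G')}{2} - \frac{m-1}{6}$, and the goal is to show $\nu_{ur}(G) \geq \nu_{ur}(G') + 1 \geq \frac{n - c}{2} - \frac{m}{6}$ after accounting for the arithmetic. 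A direct computation shows that gaining $+1$ against the loss of $\frac{1}{2}$ in the order term and the gain of $\frac{1}{6}$ in the size term yields a net surplus, which should comfortably close the gap and produce the contradiction.

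The hard part, I expect, will be ensuring that a maximum uniquely restricted matching in $G'$ can actually be chosen to avoid covering $v$, so that the pendant extension $M' \cup \{uv\}$ applies cleanly; if every maximum such matching covers $v$, I would need the alternative argument that such a matching, together with the uncovered $u$, still beats the target bound without the $+1$, relying on the component-count bookkeeping. Handling the subcase where deleting $u$ isolates $v$ (turning a $K_2$-like fragment into a trivial component and changing $c$) is the delicate accounting step, and I would verify the inequality separately there to rule out boundary effects. Once Claim~\ref{c1} is established, the minimum degree at least $2$ assumption is available for the subsequent structural claims in the proof of Theorem~\ref{t2}.
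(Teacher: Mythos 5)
There is a genuine gap at exactly the point you flag as ``the hard part,'' and your proposed fallback does not close it. If you delete only $u$ and a maximum uniquely restricted matching $M'$ of $G'=G-u$ happens to cover $v$, you cannot add $uv$, and the bound without the $+1$ is numerically insufficient: with $G'$ connected you get
$\nu_{ur}(G') \geq \frac{(n-1)-1}{2}-\frac{m-1}{6} = \frac{n-1}{2}-\frac{m}{6}-\frac{1}{3}$,
which falls short of the target $\frac{n-1}{2}-\frac{m}{6}$ by $\frac{1}{3}$. There is also no reason a maximum uniquely restricted matching of $G-u$ can be chosen to miss $v$ (think of $v$ lying on a forced matching edge of $G-u$), so the ``clean'' branch of your argument cannot be guaranteed either. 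As written, the proof does not go through.

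The paper's fix is to delete \emph{both} $u$ and $v$: set $G'=G-\{u,v\}$, so that every uniquely restricted matching $M'$ of $G'$ automatically avoids $v$ and $M'\cup\{uv\}$ is uniquely restricted in $G$ (no alternating cycle can pass through the degree-one vertex $u$). The price is that $G'$ has size $m-d_G(v)$ and up to $d_G(v)-1\leq 2$ components, but the arithmetic still closes:
$\frac{(n-2)-(d_G(v)-1)}{2}-\frac{m-d_G(v)}{6}+1 = \frac{n-1}{2}-\frac{m}{6}+1-\frac{d_G(v)}{3} \geq \frac{n-1}{2}-\frac{m}{6}$
since $d_G(v)\leq 3$. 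So the component-count bookkeeping you were worried about is handled by the hypothesis of the theorem itself (the bound is in terms of $n(G)-c(G)$), and the guaranteed $+1$ from the edge $uv$ is what makes the inequality work; your version sacrifices that $+1$ in the problematic case and cannot recover it.
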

\begin{proof}
Suppose, for a contradiction, that $u$ is a vertex of degree $1$.
If $v$ is the unique neighbor of $u$,
then the graph $G'=G-\{u,v \}$ has order $n-2$, size $m-d_G(v)$, and at most $d_G(v)-1$ components.
Since $\{ uv\}\cup M'$ is a uniquely restricted matching in $G$ for every uniquely restricted matching $M'$ in $G'$,
the minimality of $G$ implies
\begin{eqnarray*}
\nu_{ur}(G) &\geq &\nu_{ur}(G')+1
\geq \frac{(n -2) - (d_G(v)-1)}{2} - \frac{m - d_G(v)}{6} + 1
\geq \frac{n - 1}{2} - \frac{m}{6},
\end{eqnarray*}
where we used $d_G(v)\leq 3$ for the last inequality.
This contradiction completes the proof of the claim.
\end{proof}
\begin{claim} \label{c2}
There are no two adjacent vertices of degree $2$.
\end{claim}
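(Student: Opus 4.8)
The plan is to argue by the same minimality used for Claim~\ref{c1}, deleting the two degree-$2$ vertices and re-expanding a uniquely restricted matching by one edge. Suppose, for a contradiction, that $u$ and $v$ are adjacent with $d_G(u)=d_G(v)=2$; let $a$ be the second neighbour of $u$ and $b$ the second neighbour of $v$, which exist by Claim~\ref{c1}. The naive reduction $G-\{u,v\}$ together with the edge $uv$ destroys three edges and may disconnect $G$, leaving the counting with no slack, so instead I would work with the modified graph $G^+=(G-\{u,v\})+ab$, obtained by deleting $u,v$ and joining their outer neighbours. This graph is subcubic and connected (so $c(G^+)=1$), with $n(G^+)=n-2$ and $m(G^+)=m-2$; applying the minimality of $G$ to $G^+$ and adding $1$ for a recovered edge then yields a value exceeding $\frac{n-1}{2}-\frac{m}{6}$ by $\tfrac13$, which is ample slack for the contradiction.

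The first step is the edge-recovery. Let $M^+$ be a uniquely restricted matching in $G^+$ of the size guaranteed by minimality. If $ab\notin M^+$, then $M^+\subseteq E(G)$ and I set $M=M^+\cup\{uv\}$; if $ab\in M^+$, I set $M=(M^+\setminus\{ab\})\cup\{ua,vb\}$. In both cases $M$ is a matching of $G$ with $|M|=|M^+|+1$, so the counting above gives the required contradiction as soon as $M$ is shown to be uniquely restricted.

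The crux, and the main obstacle, is transferring unique restrictedness from $G^+$ to $G$, which I would carry out through the alternating-cycle characterisation recalled in the introduction. Suppose $M$ admits an alternating cycle $C$ in $G$. If $C$ avoids both $u$ and $v$, then $C$ lies in $G-\{u,v\}\subseteq G^+$ and is already an $M^+$-alternating cycle, a contradiction. If $C$ meets $\{u,v\}$, then since $u$ and $v$ have degree $2$ the cycle must traverse the whole segment $a,u,v,b$; deleting this segment leaves an $M^+$-alternating $a$--$b$ path off $\{u,v\}$, and closing it with the edge $ab$ produces the forbidden $M^+$-alternating cycle in $G^+$ (using $ab\notin M^+$ in the first recovery and $ab\in M^+$ in the second, where the parities of the end-edges match up correctly). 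Either way we contradict the unique restrictedness of $M^+$, so $M$ is uniquely restricted.

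Finally, the construction of $G^+$ presupposes $ab\notin E(G)$, so the degenerate case $ab\in E(G)$ must be treated separately: here $a,u,v,b$ spans a $4$-cycle. If $d_G(a)=d_G(b)=2$ then $G=C_4$, which satisfies the bound directly; otherwise I would run the analogous edge-recovery on $G-\{u,v\}$ using the already-present edge $ab$, with the same alternating-cycle bookkeeping. Combining all cases contradicts the choice of $G$ and completes the proof of the claim.
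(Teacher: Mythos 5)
Your main reduction is sound and is genuinely different from the paper's: instead of the paper's case analysis (bridge/triangle versus a cycle of length at least $4$ through $uv$, deleting up to four vertices), you pass to $G^+=(G-\{u,v\})+ab$, and the swap $ab\leftrightarrow\{ua,vb\}$ together with the alternating-cycle transfer is exactly the device the paper itself uses later in Claim~6. In the generic case ($a\neq b$, $ab\notin E(G)$) your bookkeeping checks out: $G^+$ is simple, subcubic, connected, has $m-2$ edges, and any $M$-alternating cycle of $G$ either avoids $u,v$ (hence lives in $G^+$) or traverses $a,u,v,b$ and collapses onto an $M^+$-alternating cycle through the artificial edge $ab$; the count then beats the bound by $\tfrac13$.

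The gap is in the degenerate cases. First, you never treat $a=b$ (a triangle through $u,v$), where $G^+$ is undefined; this is easy to repair by deleting $u,v$ directly, but it must be said, since Claims~3 and~4 are not yet available. More seriously, when $ab\in E(G)$ your plan to ``run the analogous edge-recovery on $G-\{u,v\}$ with the same alternating-cycle bookkeeping'' fails: if the matching $M'$ of $G'=G-\{u,v\}$ happens to contain $ab$, then $(M'\setminus\{ab\})\cup\{ua,vb\}$ admits the $M$-alternating $4$-cycle $a,u,v,b,a$ in $G$ (edges $ua,vb\in M$ and $uv,ab\notin M$ alternate), and the alternative $M'\cup\{uv\}$ admits the same $4$-cycle with the roles of the edge classes exchanged. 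The transfer argument breaks precisely because $ab$ is now a real edge of $G$, so the offending cycle does not correspond to any cycle of $G'$ that your induction hypothesis controls. This subcase needs a different reduction --- e.g.\ the paper's move of deleting a third vertex $w$ (and possibly a fourth) when $uv$ lies on a cycle of length at least $4$, or a contraction of the $4$-cycle as in the paper's Claim~4 --- so as written the proof of the claim is incomplete.
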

\begin{proof}
Suppose, for a contradiction,
that $u$ and $v$ are two adjacent vertices of degree $2$.

First, we assume that 
the edge $uv$ 
either is a bridge 
or belongs to a triangle.
The graph $G'=G-\{ u,v\}$ has order $n-2$, size $m-3$, and at most $2$ components.
Since adding $uv$ to a uniquely restricted matching in $G'$ yields a uniquely restricted matching in $G$,
the minimality of $G$ implies
\begin{eqnarray*}
\nu_{ur}(G) &\geq &\nu_{ur}(G')+1
\geq \frac{(n -2) - 2}{2} - \frac{m- 3}{6} + 1
=\frac{n- 1}{2} - \frac{m}{6}.
\end{eqnarray*}
Hence, we may assume that $uv$ belongs to some cycle $C$ of length at least $4$.

Let $w$ be the neighbor of $u$ distinct from $v$.
By Claim \ref{c1}, the graph $G'=G-\{ u,v,w\}$ has order $n-3$, size at most $m-4$, and at most $2$ components.
Note that adding $uv$ to a uniquely restricted matching in $G'$ yields a uniquely restricted matching in $G$.
If $G'$ is connected, then 
the minimality of $G$ implies
\begin{eqnarray*}
\nu_{ur}(G) &\geq &\nu_{ur}(G')+1
\geq \frac{(n -3) - 1}{2} - \frac{m - 4}{6} + 1
>\frac{n- 1}{2} - \frac{m}{6}.
\end{eqnarray*}
Hence, we may assume that $G'$ has $2$ components.
This implies that $w$ has degree $3$, 
and that a neighbor $x$ of $w$ lies in a different component of $G'$ than the vertices in $V(C)\setminus \{ u,v,w\}$.
By Claim \ref{c1}, the graph $G''=G-\{ u,v,w,x\}$ has order $n-4$, size at most $m-6$, and at most $3$ components.
Since adding $uv$ and $wx$ to a uniquely restricted matching in $G''$ yields a uniquely restricted matching in $G$,
the minimality of $G$ implies
\begin{eqnarray*}
\nu_{ur}(G) &\geq &\nu_{ur}(G'')+2
\geq \frac{(n -4) - 3}{2} - \frac{m - 6}{6} + 2
=\frac{n - 1}{2} - \frac{m}{6},
\end{eqnarray*}
which completes the proof of the claim.
\end{proof}
\begin{claim} \label{c3}
No vertex of degree $2$ is contained in a triangle.
\end{claim}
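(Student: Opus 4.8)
The plan is to argue by contradiction using the minimality of $G$, in the same spirit as the reductions in Claims \ref{c1} and \ref{c2}. Suppose some vertex $u$ of degree $2$ lies in a triangle $uvw$. Since $u$ has degree $2$ and is adjacent to both $v$ and $w$, Claim \ref{c2} forces $v$ and $w$ to have degree $3$; let $x$ be the neighbor of $v$ outside $\{u,w\}$ and let $y$ be the neighbor of $w$ outside $\{u,v\}$. The structural observation driving every reduction is that adding the edge $uv$ to any uniquely restricted matching keeps it uniquely restricted as long as $w$ is left uncovered: an alternating cycle through $u$ would have to use both edges $uv$ and $uw$ at $u$, hence pass through $w$, but an uncovered vertex cannot lie on an alternating cycle.

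First I would remove $N=\{u,v,w\}$ and set $G'=G-N$, which has order $n-3$ and size $m-5$ (the three triangle edges together with $vx$ and $wy$). Adding $uv$ to a uniquely restricted matching of $G'$ yields one of $G$ by the observation above, so $\nu_{ur}(G)\geq \nu_{ur}(G')+1$. If $G'$ is connected, the minimality of $G$ gives a value that strictly beats $\frac{n-1}{2}-\frac{m}{6}$, a contradiction. The only obstruction is therefore that $G'$ is disconnected.

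Here lies the main work. Because the triangle is attached to the rest of $G$ only through the two edges $vx$ and $wy$, removing $N$ creates at most two components, and if it creates two then $x\neq y$ and the two components $C_x,C_y$ contain $x$ and $y$ respectively. In this case I would instead remove $\{u,v,w,y\}$ and add the two-edge matching $\{uv,wy\}$. The delicate point, which I expect to be the genuine obstacle, is verifying that this extended matching is still uniquely restricted: tracing an alternating cycle, one is forced along the segment $x\text{--}v\text{--}u\text{--}w\text{--}y$, so the cycle would then have to return from $C_x$ to $C_y$; but every path between these components passes through $N$, which the cycle has already exhausted, so no such cycle can close. This uses the separation of $C_x$ and $C_y$ rather than a purely local degree condition, which is what makes it more subtle than the single-edge case.

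It remains to verify the counting for $G''=G-\{u,v,w,y\}$, of order $n-4$. Writing $r$ for the number of deleted edges and $c''$ for the number of components of $G''$, the desired inequality $\nu_{ur}(G)\geq \nu_{ur}(G'')+2\geq \frac{n-1}{2}-\frac{m}{6}$ reduces to $r\geq 3(c''-1)$. Since $C_x$ survives intact, one has $c''=1+(\text{number of components of }C_y-y)\leq 3$, and the extreme case $c''=3$ occurs exactly when $y$ is a cut vertex of $C_y$ with two neighbors inside $C_y$, which forces $r=7\geq 6$; the values $c''\in\{1,2\}$ are immediate since $r\geq 5$ always. In every case the reduction contradicts the choice of $G$, completing the proof of the claim.
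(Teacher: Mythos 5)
Your argument is correct, and its skeleton matches the paper's: a minimal-counterexample reduction around the triangle, using that $u$ has degree $2$ so every $M$-alternating cycle through $uv$ must continue along $uw$ and hence pass through $w$. Your first case, $G-\{u,v,w\}$ connected, is exactly the paper's second case, with the same count $\frac{(n-3)-1}{2}-\frac{m-5}{6}+1>\frac{n-1}{2}-\frac{m}{6}$. You diverge in the disconnected case. The paper notes that this is precisely the situation in which $G'=G-\{u,v\}$ has two components (then $x$ and $w$ lie in different components of $G'$, which already blocks the alternating cycle $x,v,u,w,\ldots$), so it deletes only $\{u,v\}$, adds the single edge $uv$, and the component term of the bound absorbs the disconnection: $\frac{(n-2)-2}{2}-\frac{m-4}{6}+1>\frac{n-1}{2}-\frac{m}{6}$, with nothing further to check. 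You instead delete $\{u,v,w,y\}$ and add the two-edge matching $\{uv,wy\}$, which obliges you to verify unique restriction via the separation of $C_x$ and $C_y$ and to run the count $r\geq 3(c''-1)$ over the possible component numbers of the remainder. That verification does go through: every alternating cycle through $wy$ is forced through $uv$ as well and would need an $x$--$y$ path in $G-\{u,v,w\}$, which does not exist; and since $r=4+d_G(y)$ while $c''\leq d_G(y)\leq 3$, the inequality $r\geq 3(c''-1)$ holds in every subcase. So both routes are valid; the paper's handling of the disconnected case is shorter because it lets the $-c(G)/2$ term pay for the disconnection rather than repairing it with a second matching edge.
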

\begin{proof}
Suppose, for a contradiction, that $uvwu$ is a triangle and that $u$ has degree $2$.
By Claim \ref{c2}, the degree of $v$ and $w$ is $3$.
The graph $G'=G-\{ u,v\}$ has order $n-2$, size $m-4$, and at most $2$ components.
If $G'$ has $2$ components, then,
since adding $uv$ to a uniquely restricted matching in $G'$ yields a uniquely restricted matching in $G$,
the minimality of $G$ implies
\begin{eqnarray*}
\nu_{ur}(G) &\geq &\nu_{ur}(G')+1
\geq \frac{(n -2) - 2}{2} - \frac{m - 4}{6} + 1
>\frac{n - 1}{2} - \frac{m}{6}.
\end{eqnarray*}
Hence, we may assume that $G'$ is connected.
Since $w$ has degree $1$ in $G'$, the graph $G''=G-\{ u,v,w\}$ has order $n-3$, size $m-5$, and is connected.
Since adding $uv$ to a uniquely restricted matching in $G''$ yields a uniquely restricted matching in $G$,
the minimality of $G$ implies
\begin{eqnarray*}
\nu_{ur}(G) &\geq &\nu_{ur}(G'')+1
\geq \frac{(n -3) - 1}{2} - \frac{m - 5}{6} + 1
>\frac{n - 1}{2} - \frac{m}{6},
\end{eqnarray*}
which completes the proof of the claim.
\end{proof}
\begin{claim} \label{c4}
No vertex of degree $2$ is contained in a cycle of length $4$.
\end{claim}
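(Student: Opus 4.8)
The plan is to continue with the local-surgery method of Claims~\ref{c2} and~\ref{c3}. Suppose, for a contradiction, that some vertex $u$ of degree $2$ lies on a $4$-cycle $uxyzu$; then $x,z$ are the two neighbours of $u$ and $y$ is adjacent to both. By Claim~\ref{c2} the vertices $x$ and $z$ have degree $3$, and by Claim~\ref{c3} the edge $xz$ is missing; since $uy\notin E(G)$ as well, the $4$-cycle is induced. Let $x'$ and $z'$ denote the third neighbours of $x$ and $z$. I will repeatedly use the following safety rule, implicit in the earlier claims: if $M'$ is uniquely restricted in $G-S$ and $F\subseteq E(G[S])$ is a matching in which every edge has an endpoint all of whose remaining neighbours are deleted and left uncovered, then $M'\cup F$ is uniquely restricted in $G$, since any alternating cycle would have to avoid such endpoints and would therefore already lie in $G-S$. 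A degree-$2$ vertex matched to one neighbour, with its other neighbour deleted and uncovered, is the prototypical safe edge.

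First I would delete $S_0=\{u,x,z\}$ and add the edge $ux$, which is safe because the remaining neighbour $z$ of $u$ is deleted and uncovered. Writing $k,j,p,t$ for the number of deleted vertices, destroyed edges, resulting components, and added edges, minimality of $G$ gives
\[
\nu_{ur}(G)\ \ge\ \frac{(n-k)-p}{2}-\frac{m-j}{6}+t,
\]
which already contradicts $\nu_{ur}(G)<\frac{n-1}{2}-\frac{m}{6}$ as soon as $j+6t\ge 3(k+p-1)$. For the base move $k=3$, $t=1$, and $j=6$ (the four cycle edges together with $xx'$ and $zz'$), so the inequality holds whenever $p\le 2$. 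Since $N_G(S_0)\setminus S_0\subseteq\{y,x',z'\}$, the graph $G-S_0$ has at most three components; the coincidence $x'=z'$ forces $p\le 2$ and is harmless, so only the genuinely branching case $p=3$ remains, in which $y$, $x'$, $z'$ lie in three different components and $xx'$, $zz'$ are bridges.

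In the case $p=3$ I would look for a second safe edge, distinguishing on the degree of the opposite vertex $y$. If $y$ has degree $3$, then in $G-S_0$ it is pendant, attached only to its third neighbour $y'$, and one checks directly that adding $yy'$ alongside $ux$ creates no alternating cycle (any alternating path through $y,x,u$ dead-ends at the uncovered vertex $z$); this yields $t=2$. If $y$ has degree $2$, then $u$ and $y$ are twins with $N_G(u)=N_G(y)=\{x,z\}$, the $4$-cycle behaves like $C_4$ whose only perfect matching alternates, and no second cycle edge is available; instead I would exploit the sacrificed side, matching an edge $z'z''$ when $z'$ has degree $2$ (so that deleting $z$ makes $z'$ pendant and hence protected), or symmetrically using $x'$. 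In each of these subcases I expect the reduced inequality $j+6t\ge 3(k+p-1)$ to close after accounting for the extra deleted vertices and destroyed edges, exactly as in the multi-component branches of Claims~\ref{c2} and~\ref{c3}.

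The main obstacle is the residual configuration in which $p=3$ and every outer neighbour that could supply a pendant — namely $x'$, $z'$, and, in the non-twin case, $y'$ — has degree $3$, so that no second protected edge appears for free. Here the $4$-cycle sits at a junction separating three branches joined only through degree-$3$ vertices, and the component penalty $-p/2$ exactly cancels the single safe edge that the cycle provides. I would resolve this by digging one level deeper along a bridge $xx'$ (or $zz'$, or $yy'$): match the bridge with its inner endpoint protected, delete the outer endpoint, and continue into the branch until a pendant vertex or a second safe edge is forced, checking at each step that $j+6t\ge 3(k+p-1)$ is preserved. Verifying that this deeper surgery always closes the inequality across the possible degree patterns of $x'$, $z'$, $y'$, and $y$ is the technical heart of the argument.
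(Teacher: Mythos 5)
Your opening reduction---delete the degree-$2$ vertex together with its two neighbours on the $4$-cycle, destroy six edges, and add one safe edge---is exactly the paper's first move, and it correctly disposes of the case in which the deletion leaves at most two components. The genuine gap is that you never close the remaining case $p=3$. You propose hunting for a second safe edge by case distinctions on the degrees of $y$, $x'$, $z'$, and you concede that in the residual configuration where all of these have degree $3$ you would have to ``dig one level deeper along a bridge \dots\ until a pendant vertex or a second safe edge is forced,'' leaving the verification as the admitted technical heart. That is not a proof: in precisely that configuration the three branches may consist entirely of degree-$3$ vertices, so no pendant vertex is ever forced, there is no bound on how deep the surgery must go, and no invariant is given under which $j+6t\ge 3(k+p-1)$ is eventually restored. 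The approach as described need not terminate.

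The paper resolves the three-component case with a single different idea that your proposal is missing: it \emph{contracts the entire $4$-cycle} $C$ to one vertex. Because $C$ is induced (Claim \ref{c3}) and, in this case, the three outside neighbours lie in distinct components, the contracted graph $G''$ is simple, connected, and has order $n-3$ and size $m-4$. Minimality applied to $G''$ produces a uniquely restricted matching of $G$ that uses no edge of $C$ and covers at most one vertex of $C$; after relabelling so that the covered vertex is not $v$, adding $uv$ gives a uniquely restricted matching of size strictly greater than $\frac{n-1}{2}-\frac{m}{6}$. The contraction is what makes the count close: it eliminates the component penalty (the new vertex reconnects the three branches) while charging only four lost edges instead of six. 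Without this or an equivalent device, your argument does not go through in the hard case.
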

\begin{proof}
Suppose, for a contradiction, that $C:uvxwu$ is a cycle and that $u$ has degree $2$.
By Claim \ref{c3}, the cycle $C$ is induced.
The graph $G'=G-\{ u,v,w\}$ has order $n-3$, size $m-6$, and at most $3$ components.
If $G'$ has at most $2$ components,
then, since adding $uv$ to a uniquely restricted matching in $G'$ yields a uniquely restricted matching in $G$,
the minimality of $G$ implies
\begin{eqnarray*}
\nu_{ur}(G) &\geq &\nu_{ur}(G')+1
\geq \frac{(n -3) - 2}{2} - \frac{m - 6}{6} + 1
=\frac{n- 1}{2} - \frac{m}{6}.
\end{eqnarray*}
Hence, we may assume that $G'$ has $3$ components.
Now, the graph $G''$ that arises from $G$ by contracting $C$ 
is simple, and has order $n-3$, size $m-4$, and is connected.
By the minimality of $G$,
the graph $G''$ has a uniquely restricted matching of size at least $\frac{(n -3) -1}{2} - \frac{m - 4}{6}$.
This implies that $G$ has a uniquely restricted matching $M''$ of size at least $\frac{(n -3) -1}{2} - \frac{m - 4}{6}$
such that $V(M'')$ contains at most one vertex from $C$.
By symmetry, we may assume that $v\not\in V(M'')$.
Now, adding $uv$ to $M''$ yields a uniquely restricted matching in $G$
of size at least 
$$\frac{(n -3) -1}{2} - \frac{m - 4}{6}+1>\frac{n - 1}{2} - \frac{m}{6},$$
which completes the proof of the claim.
\end{proof}
\begin{claim} \label{c5}
For any vertex $u$ of degree $2$, the graph $G-N_G[u]$ has exactly $4$ components. 
\end{claim}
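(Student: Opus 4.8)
The plan is to determine the number of components $c'$ of $G':=G-N_G[u]$ by sandwiching it between $3$ and $4$ and then excluding the value $3$. First I would record the local structure at $u$. Writing $N_G(u)=\{v,w\}$, Claim~\ref{c2} forces $v$ and $w$ to have degree $3$, Claim~\ref{c3} gives $v\not\sim w$, and Claim~\ref{c4} shows that $v$ and $w$ have no common neighbour; hence the four vertices $v_1,v_2$ (the neighbours of $v$ other than $u$) and $w_1,w_2$ (those of $w$) are pairwise distinct. Consequently $G'$ has order $n-3$ and size $m-6$, and every edge leaving $\{u,v,w\}$ ends in one of these four \emph{attachment vertices}. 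Since $G$ is connected, each component of $G'$ contains at least one attachment vertex, so $c'\le 4$.

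For the lower bound I would add the single edge $uv$. For any uniquely restricted matching $M'$ of $G'$, the set $M'\cup\{uv\}$ is again uniquely restricted in $G$ (an alternating cycle through $uv$ would have to re-enter $u$ through $uw$, but $w\notin V(M')$). Minimality of $G$ therefore yields
\[
\nu_{ur}(G)\ \ge\ \nu_{ur}(G')+1\ \ge\ \frac{(n-3)-c'}{2}-\frac{m-6}{6}+1\ =\ \frac{n-3-c'}{2}-\frac{m}{6}+2,
\]
which is at least $\frac{n-1}{2}-\frac{m}{6}$ as soon as $c'\le 2$. Thus $c'\ge 3$, and it remains to rule out $c'=3$.

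So suppose $c'=3$. Then the four attachment vertices split as $(2,1,1)$ among the components, so at least one of the pairs $\{v_1,v_2\}$, $\{w_1,w_2\}$ lies in two different components; by symmetry assume $w_1,w_2$ do. Checking the possible distributions shows that one of them, say $w_i$, lies in a component of $G'$ containing neither $v_1$ nor $v_2$. The point of this choice is that whenever $M'$ is a uniquely restricted matching of $G'$ with $w_i\notin V(M')$, the set $M'\cup\{uv,ww_i\}$ is uniquely restricted in $G$: the only candidate alternating cycle through the two new edges would run $v-u-w-w_i-\cdots-v_j-v$ and would therefore require an alternating $w_i$--$v_j$ path inside $G'$, which cannot exist since $w_i$ shares no component with $v_1,v_2$. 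Adding these two edges and using the bound for three components would give
\[
\nu_{ur}(G)\ \ge\ |M'|+2\ \ge\ \frac{(n-3)-3}{2}-\frac{m-6}{6}+2\ =\ \frac{n}{2}-\frac{m}{6}\ >\ \frac{n-1}{2}-\frac{m}{6},
\]
a contradiction; moreover the computation leaves a slack of $\tfrac12$, so it suffices to produce a uniquely restricted matching of $G'$ missing $w_i$ whose size is within $\tfrac12$ of the guaranteed bound.

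The main obstacle is exactly this last step: keeping the chosen vertex $w_i$ exposed without losing too much. The natural move is to delete $w_i$ from its component and apply the minimality of $G$ to the smaller graph, since every uniquely restricted matching of that graph is one of $G'$ avoiding $w_i$; a direct count shows this stays within the $\tfrac12$ slack unless $w_i$ is a degree-$2$ cut vertex of its component. That single residual configuration I would handle in the spirit of the second half of Claim~\ref{c4}: reconnect the two sides split off by $w_i$ through a contraction, apply minimality to the resulting smaller, simple, subcubic graph, and lift the matching back to $G$ so that it meets the contracted part at most once, which again exposes $w_i$ at no extra cost. Combining the cases contradicts the minimality of $G$ and forces $c'=4$.
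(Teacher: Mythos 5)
Your route differs from the paper's: the paper distinguishes whether $uv$ is a bridge and, in the non-bridge case, contracts a cycle through $uv$ together with the component of $G-N_G[u]$ it meets, whereas you try to augment by the two edges $uv$ and $ww_i$ after locating an attachment vertex $w_i$ in a component free of $v_1,v_2$. The parts you carry out are sound: $c'\le 4$ via the four distinct attachment vertices, the exclusion of $c'\le 2$, the existence of the desired $w_i$ when $c'=3$, and the verification that $M'\cup\{uv,ww_i\}$ is uniquely restricted when $w_i\notin V(M')$ all check out, and the arithmetic leaves the slack of $\tfrac12$ you claim.

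The gap is the residual configuration you yourself flag: $w_i$ of degree $2$ in $G'$ (degree $3$ in $G$) and a cut vertex of its component $A$ of $G'$. Your one-sentence fix does not go through under any natural reading. Deleting $w_i$ gives a graph of order $n-4$, size $m-8$ and $4$ components, yielding $\frac{n}{2}-\frac{2}{3}-\frac{m}{6}$ after adding the two edges, which misses the target $\frac{n}{2}-\frac12-\frac{m}{6}$ by $\frac16$. Contracting the path $a$--$w_i$--$b$ (the Claim~\ref{c4}-style move) identifies $a,w_i,b$ into a vertex of degree up to $4$, so the result need not be subcubic and minimality cannot be invoked; even ignoring that, the count $\frac{(n-5)-3}{2}-\frac{m-8}{6}+2$ again falls short by $\frac16$. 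Contracting only one of the two edges at $w_i$ keeps the graph subcubic and makes the arithmetic work, but the lifted matching edge at the contracted vertex may be $w_ib$, which covers $w_i$ and destroys the augmentation by $ww_i$. So the case is genuinely open in your write-up and needs a new idea; note that in this configuration $ww_i$ is in fact a bridge of $G$ (since $A$ contains none of $v_1,v_2,w_{3-i}$), which suggests restructuring along the lines of the paper's bridge/cycle dichotomy rather than patching the exposure of $w_i$.
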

\begin{proof}
Suppose, for a contradiction, that $u$ is a vertex of degree $2$ such that $G-N_G[u]$ has at most $3$ components.
By Claims \ref{c2}, \ref{c3}, and \ref{c4}, 
the two neighbors $v$ and $w$ of $u$ have degree $3$,
are not adjacent, and have no common neighbor.

First, we assume that the edge $uv$ is a bridge.
By symmetry, we may assume that the neighbors of $v$ in $G-N_G[u]$ belong to the same component of that graph.
This implies that the graph $G'=G-\{ u,v\}$ has order $n-2$, size $m-4$, and $2$ components.
Since adding $uv$ to a uniquely restricted matching in $G'$ yields a uniquely restricted matching in $G$,
the minimality of $G$ implies
\begin{eqnarray*}
\nu_{ur}(G) &\geq &\nu_{ur}(G')+1
\geq \frac{(n-2) - 2}{2} - \frac{m- 4}{6} + 1
>\frac{n- 1}{2} - \frac{m}{6}.
\end{eqnarray*}
Hence, we may assume that the edge $uv$ belong to a cycle $C$,
which, by Claim \ref{c4}, has length at least $5$.

Let $H$ be the component of $G-N_G[u]$ that contains $V(C)\setminus \{ u,v,w\}$.
The graph $G''$ that arises from $G$ by contracting all edges of $C\cup H$
has order $n-n(H)-2$, size $m-m(H)-4$, and is connected.
The minimality of $G$ implies that 
the graph $G$ has a uniquely restricted matching $M''$ of size at least $\frac{n-n(H)-2-1}{2}-\frac{m-m(H)-4}{6}$
that used no edge from $C\cup H$ and is incident with at most one vertex in $\{ v,w\}$,
and 
the graph $H$ has a uniquely restricted matching $M_H$ of size at least $\frac{n(H)-1}{2}-\frac{m(H)}{6}$.
By symmetry, we may assume that $v\not\in V(M'')$.
Now, $\{ uv\}\cup M''\cup M_H$ is a uniquely restricted matching in $G$ of size at least 
$$\frac{n-4}{2}-\frac{m-4}{6}+1>\frac{n-1}{2}-\frac{m}{6},$$
which completes the proof of the claim.
\end{proof}
\begin{claim}\label{c6}
$G$ is cubic.
\end{claim}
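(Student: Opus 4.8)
Suppose, for a contradiction, that $G$ contains a vertex $u$ of degree $2$; since $G$ has minimum degree at least $2$ by Claim \ref{c1} and is subcubic, proving that no such $u$ exists establishes the claim. By Claims \ref{c2}, \ref{c3}, and \ref{c4}, the two neighbors $v$ and $w$ of $u$ have degree $3$, are non-adjacent, and have no common neighbor besides $u$, so the four edges from $\{v,w\}$ to $G-\{u,v,w\}$ reach four distinct vertices $v_1,v_2\in N_G(v)$ and $w_1,w_2\in N_G(w)$. By Claim \ref{c5}, the graph $G-\{u,v,w\}$ has exactly four components; since $G$ is connected, each is reached by at least one of these four edges, hence by exactly one. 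Writing $H_1,H_2,H_3,H_4$ for the components containing $v_1,v_2,w_1,w_2$, respectively, each of $vv_1,vv_2,ww_1,ww_2$ is a bridge, and since no path joins $v$ and $w$ while avoiding $u$, so are $uv$ and $uw$. In particular, every cycle of $G$ lies within a single $H_i$. I set $G_v=G[\{v\}\cup V(H_1)\cup V(H_2)]$ and $G_w=G[\{w\}\cup V(H_3)\cup V(H_4)]$, which are connected subcubic graphs of order less than $n$, with $n(G_v)+n(G_w)=n-1$ and $m(G_v)+m(G_w)=m-2$.

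The plan is to build a large uniquely restricted matching of $G$ from matchings of $G_v$ and $G_w$, the gluing being justified by the bridge structure: if $M$ is a union of uniquely restricted matchings of the $H_i$, of $G_v$, or of $G_w$, possibly together with the single edge $uv$ or $uw$, then any $M$-alternating cycle, being a cycle, would lie inside one $H_i$ and, restricted there, would contradict that the corresponding matching is uniquely restricted; hence $M$ is uniquely restricted in $G$. Applying the minimality of $G$ to the smaller connected graphs $H_1,\dots,H_4$, $G_v$, and $G_w$ gives $\nu_{ur}(H_i)\geq \frac{n(H_i)-1}{2}-\frac{m(H_i)}{6}$ together with the analogous bounds for $G_v$ and $G_w$, where $\sum_i n(H_i)=n-3$ and $\sum_i m(H_i)=m-6$.

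I would then split into two cases. If $G_v$ has a maximum uniquely restricted matching $M_v$ avoiding $v$, or symmetrically $G_w$ has one avoiding $w$, then, taking any maximum uniquely restricted matching $M_w$ of $G_w$, the set $M_v\cup\{uv\}\cup M_w$ is uniquely restricted of size $\nu_{ur}(G_v)+1+\nu_{ur}(G_w)$. Using $n(G_v)+n(G_w)=n-1$ and $m(G_v)+m(G_w)=m-2$, a short computation yields $\nu_{ur}(G)\geq \frac{n-1}{2}+\frac13-\frac{m}{6}$, contradicting that $G$ is a counterexample.

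The remaining case, in which every maximum uniquely restricted matching of $G_v$ covers $v$ and every one of $G_w$ covers $w$, is the main obstacle, since now no edge at $u$ can be appended to a union $M_v\cup M_w$. The resolution is that this covering condition forces both pieces to be larger: because $G_v-v=H_1\cup H_2$ and deleting a vertex lowers the value by at most one, the condition gives $\nu_{ur}(G_v)=\nu_{ur}(H_1)+\nu_{ur}(H_2)+1$, and likewise $\nu_{ur}(G_w)=\nu_{ur}(H_3)+\nu_{ur}(H_4)+1$. Then the plain union $M_v\cup M_w$, which merely leaves $u$ unmatched, already has size $\sum_i\nu_{ur}(H_i)+2\geq \frac{n-7}{2}-\frac{m}{6}+1+2=\frac{n-1}{2}-\frac{m}{6}$, once more contradicting the choice of $G$. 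Since both cases are impossible, $G$ has no vertex of degree $2$ and is therefore cubic.
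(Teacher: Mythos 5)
Your proof is correct, but it takes a genuinely different route from the paper. The paper handles Claim \ref{c6} with a single local surgery: it deletes $u$ and $v$ and adds the edge $wx$ for a neighbor $x$ of $v$, obtaining a graph $G'$ of order $n-2$, size $m-3$, and (via Claim \ref{c5}) exactly $2$ components; a maximum uniquely restricted matching $M'$ of $G'$ is then lifted back to $G$ by adding $uv$ if $wx\notin M'$ and by swapping $wx$ for the pair $uw,vx$ otherwise, which immediately gives the bound $\frac{n-1}{2}-\frac{m}{6}$. You instead exploit the full bridge decomposition forced by Claim \ref{c5}: all six edges meeting $\{u,v,w\}$ are bridges, every cycle lives in one of the four components $H_i$, and you split into cases according to whether the two halves $G_v$ and $G_w$ admit maximum uniquely restricted matchings avoiding $v$ and $w$. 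Your gluing argument (alternating cycles are confined to a single $H_i$), the inequality $\nu_{ur}(G_v-v)\geq\nu_{ur}(G_v)-1$, and the additivity over components are all valid (the vertex-deletion inequality needs that subsets of uniquely restricted matchings are uniquely restricted, which follows from the alternating-cycle characterization since a matched--unmatched--matched pattern forces the middle edge out of the larger matching), and both of your cases land at or above $\frac{n-1}{2}-\frac{m}{6}$, which suffices. The trade-off: your argument is more structural and self-explanatory about why the bound holds, but it is longer and leans on several auxiliary facts about $\nu_{ur}$; the paper's edge-addition trick avoids the case distinction entirely at the cost of a less transparent verification that the lifted matching is uniquely restricted.
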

\begin{proof}
Suppose, for a contradiction, that $u$ is a vertex of degree $2$.
By Claim \ref{c5}, the graph $G-N_G[u]$ has exactly $4$ components.
Let $v$ and $w$ be the neighbors of $u$, and let $x$ be a neighbor of $v$ distinct from $u$.
The graph $G'$ that arises from $G-\{ u,v\}$ by adding the edge $wx$
has order $n-2$, size $m-3$, and $2$ components.
By the minimality of $G$,
the graph $G'$ has a uniquely restricted matching $M'$ of size at least $\frac{(n-2)-2}{2}-\frac{m-3}{6}$.
If $wx\not\in M'$, then let $M=M'\cup \{ uv\}$,
otherwise, let $M=(M'\setminus \{ wx\})\cup \{ uw,vx\}$.
The matching $M$ is uniquely restricted in $G$ and has size at least $\frac{n-1}{2}-\frac{m}{6}$,
which completes the proof of the claim.
\end{proof}
Since $G$ is cubic, we obtain
$\nu_{ur}(G) < \frac{n-1}{2}-\frac{m}{6}=\frac{n-1}{2}-\frac{n}{4}=\frac{n-2}{4}$.
This final contradiction to (\ref{e7})
completes the proof.
\end{proof}
We proceed to improvements for graphs without short cycles.
We deduce Theorem \ref{t3} from the following result.
For a graph $G$, let $n_{\leq 1}(G)$ denote the number of vertices of degree at most $1$ in $G$.

\begin{lemma} \label{t4}
If $G$ is a graph with maximum degree at most $\Delta$ for some $\Delta\geq 4$ and girth at least $5$ 
such that no component of $G$ is $\Delta$-regular, then
\begin{displaymath}
\nu_{ur}(G) \geq \frac{n(G) - n_{\leq 1}(G)}{\Delta}.
\end{displaymath}
\end{lemma}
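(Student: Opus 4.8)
The plan is to argue by induction on the order through a minimum counterexample $G$. First I would record that the bound, the quantity $n(G)-n_{\le 1}(G)$, and all three hypotheses (maximum degree at most $\Delta$, girth at least $5$, no $\Delta$-regular component) are inherited by vertex-deleted subgraphs: deleting a set $S$ cannot raise a degree or shorten a cycle, any component of $G-S$ meeting $S$ contains a vertex that lost an edge and hence has a vertex of degree at most $\Delta-1$, and any component disjoint from $S$ is a component of $G$. Since the claimed inequality is additive over components, a minimum counterexample is connected and, by hypothesis, not $\Delta$-regular, so it has a vertex of degree at most $\Delta-1$.

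Next I would strip low-degree vertices. If $u$ has degree at most $1$ with neighbour $v$, then no alternating cycle can pass through the degree-$1$ endpoint, so adding $uv$ to any uniquely restricted matching of $G-\{u,v\}$ yields one of $G$; and deleting $\{u,v\}$ lowers $n-n_{\le 1}$ by at most $\Delta$ (the vertex $v$ together with at most $d_G(v)-1\le\Delta-1$ of its neighbours that drop to degree $1$). Induction then gives $\nu_{ur}(G)\ge\frac{n(G)-n_{\le 1}(G)}{\Delta}$, a contradiction. Hence $G$ has minimum degree at least $2$, so $n_{\le 1}(G)=0$ and it remains to prove $\nu_{ur}(G)\ge n(G)/\Delta$.

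The main reduction uses a vertex $u$ of minimum degree $\delta$, which now satisfies $2\le\delta\le\Delta-1$. Here girth at least $5$ is decisive: $N_G(u)$ is independent, the second neighbours of $u$ are pairwise distinct and each is adjacent to exactly one vertex of $N_G(u)$, so $G':=G-N_G[u]$ has no isolated vertex and every second neighbour of $u$ loses exactly one edge. Matching the single edge $uv$ is safe because every neighbour of $u$ is deleted, whence $\nu_{ur}(G)\ge\nu_{ur}(G')+1$. Since $n_{\le 1}(G')$ equals the number of \emph{degree-$2$} second neighbours of $u$, induction yields
\begin{displaymath}
\nu_{ur}(G)\ \ge\ \frac{n(G)-1-\delta-n_{\le 1}(G')}{\Delta}+1,
\end{displaymath}
which is at least $n(G)/\Delta$ as soon as $u$ has at most $\Delta-1-\delta$ second neighbours of degree $2$. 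In particular, if the minimum degree were at least $3$ there would be no degree-$2$ vertices at all, the reduction would apply to $u$ (using $\delta\le\Delta-1$, which forces $\Delta\ge4$), and we would reach a contradiction. Thus the minimum degree is exactly $2$.

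It remains to handle degree-$2$ vertices, and this is the step I expect to be the main obstacle. The reduction above fails only at a degree-$2$ vertex $u$ with neighbours $v,w$ having together at least $\Delta-2$ further neighbours of degree $2$, that is, at \emph{clustered} degree-$2$ vertices, which are exactly the near-extremal configurations. For these I would use local surgery in the spirit of the proof of Theorem~\ref{t2}: delete $u$ and one neighbour, reconnect the two now-deficient vertices by a new edge, apply induction to the smaller graph, and lift the resulting uniquely restricted matching back to $G$ while gaining the edge $uv$ (replacing the new edge by two genuine edges when it is used). The delicate point, and the heart of the argument, is to choose the surgery so that it simultaneously creates no cycle of length less than $5$ (which requires showing the two reconnected vertices are at distance at least $4$ in the reduced graph, using the girth of $G$), preserves the degrees at the reconnected vertices so that no new degree-$1$ vertex is produced and the count stays under control, and lifts to a uniquely restricted matching (which via the alternating-cycle criterion reduces to checking that every alternating cycle through the new edge corresponds to one through the deleted path). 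A case analysis over the local structure of the clustered degree-$2$ vertices, exploiting the slack afforded by $\Delta\ge4$, should show that some reduction always applies unless $G$ is $\Delta$-regular, contradicting the hypothesis and completing the proof.
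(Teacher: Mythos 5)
Your setup (minimum counterexample, elimination of degree-$1$ vertices, and the final reduction $G'=G-N_G[u]$ for a minimum-degree vertex once $\delta\geq 3$) matches the paper, and those parts are correct. The genuine gap is the case $\delta=2$, which is the heart of the lemma and which you explicitly leave as an unexecuted plan. Your proposed surgery -- delete $u$ and one neighbour $v$, and join the two deficient vertices $w$ and $x$ by a new edge -- runs into an obstruction you have not resolved: you would need $w$ and $x$ to be at distance at least $4$ in $G-\{u,v\}$ to preserve girth $5$ for the induction, but girth $5$ in $G$ does not give this. For instance, $w$ and $x$ may have a common neighbour $y$ (the cycle $wuvxyw$ has length $5$ and is permitted), in which case adding $wx$ creates a triangle and the induction hypothesis no longer applies. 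You acknowledge that this is ``the main obstacle'' and assert that ``a case analysis \dots should show that some reduction always applies,'' but that case analysis is exactly the missing content; as written, the proof is incomplete.

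The paper resolves the degree-$2$ case by a different and cleaner device (Claim 2 of the proof of Lemma \ref{t4}): take a \emph{maximal} path $P:u_1v_1u_2\ldots v_{k-1}u_k$ in which every $v_i$ has degree $2$, delete all of $V(P)$, and add the $k-1$ edges $v_iu_{i+1}$ to a uniquely restricted matching of $G-V(P)$. Maximality of $P$ together with the earlier claim that $\delta\geq 2$ forces every new low-degree vertex adjacent to an endpoint $u_1$ or $u_k$ to have two neighbours on $P$, and girth $5$ ensures at most one such vertex is adjacent to both endpoints; this yields $n_{\leq 1}(G-V(P))\leq 1+(\Delta-2)(k-2)$, and the count closes using $\Delta\geq 4$. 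No edge is added, so the girth, degree, and regularity hypotheses are trivially inherited. If you want to salvage your approach you should replace the surgery step by an argument of this type; otherwise you must supply the full case analysis, including a proof that the reconnection never shortens the girth below $5$, never creates a $\Delta$-regular component, and always lifts to a uniquely restricted matching.
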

\begin{proof}
Suppose, for a contradiction, that $G$ is a counterexample of minimum order. 
Let $n=n(G)$ and $n_{\leq 1}=n_{\leq 1}(G)$.
Clearly, $G$ is connected and $n\geq 3$.

\setcounter{claim}{0}

\begin{claim} \label{c7}
The minimum degree of $G$ is at least $2$, that is, $n_{\leq 1}=0$.
\end{claim}
\begin{proof}
Suppose, for a contradiction, that $u$ is a vertex of degree $1$.
Let $v$ be the neighbor of $u$.
The graph $G'=G-\{ u,v\}$ has order $n-2$ and $n_{\leq 1}(G')\leq n_{\leq 1}-1+(\Delta-1)=n_{\leq 1}+\Delta-2$.
Since adding $uv$ to a uniquely restricted matching in $G'$ yields a uniquely restricted matching in $G$,
the minimality of $G$ implies
$$\nu_{ur}(G) \geq \nu_{ur}(G')+1\geq \frac{(n-2)-(n_{\leq 1}+\Delta-2)}{\Delta}+1=\frac{n-n_{\leq 1}}{\Delta},$$
which completes the proof of the claim.
\end{proof}
\begin{claim} \label{c8}
The minimum degree of $G$ is at least $3$.
\end{claim}
\begin{proof}
Suppose, for a contradiction, that $G$ has a vertex of degree $2$.
Let $P: u_1v_1u_2v_2....v_{k-1}u_k$ be a maximal path in $G$ with $d_G(v_i)=2$ for every $i\in [k-1]$.
Since $G$ has a vertex of degree $2$, we obtain $k\geq 2$.
Let $G'=G-V(P)$, and let $V_{\leq 1}'$ be the set of vertices of $G'$ that have degree at most $1$ in $G'$.
By the maximality of $P$ and Claim \ref{c7},
every vertex in $V_{\leq 1}'$ that has a neighbor in $\{ u_1,u_k\}$ has two neighbors in $\{ u_1,\ldots,u_k\}$.
Furthermore, since $G$ has girth at least $5$, at most one vertex in $V_{\leq 1}'$ is adjacent to both $u_1$ and $u_k$.
This implies that all but at most one vertex in $V_{\leq 1}'$ has a neighbor in $\{ u_2,\ldots,u_{k-1}\}$,
which implies
$n_{\leq 1}(G')=|V_{\leq 1}'|\leq 1+(\Delta-2)(k-2)$.
Since adding the edges in $\{ v_iu_{i+1}:i\in [k-1]\}$ to a uniquely restricted matching in $G'$
yields a uniquely restricted matching in $G$, and $\Delta\geq 4$,
the minimality of $G$ implies 
$$\nu_{ur}(G) \geq \nu_{ur}(G')+k-1\geq \frac{(n-(2k-1))-(1+(\Delta-2)(k-2))}{\Delta}+k-1=\frac{n+\Delta-4}{\Delta}\geq \frac{n}{\Delta},$$
which completes the proof of the claim.
\end{proof}
We are now in a position to derive a final contradiction.
Let $u$ be a vertex of minimum degree $\delta$.
By Claim \ref{c8} and since $G$ is not $\Delta$-regular, we obtain $3\leq \delta\leq \Delta-1$.
Let $G'=G-N_G[u]$.
By Claim \ref{c8} and the girth condition, we obtain $n_{\leq 1}(G')=0$.
Since adding an edge incident with $u$ to a uniquely restricted matching in $G'$ yields a uniquely restricted matching in $G$,
the minimality of $G$ implies 
$$\nu_{ur}(G) \geq \nu_{ur}(G')+1\geq \frac{n-(\delta+1)}{\Delta}+1\geq \frac{n}{\Delta},$$
which completes the proof. 
\end{proof}
It is now straightforward to derive Theorem \ref{t3}.

\begin{proof}[Proof of Theorem \ref{t3}]
Suppose, for a contradiction, that $G$ is a counterexample of minimum order. 
Clearly, $G$ is connected and has order at least $3$. 

First, we assume that $G$ has minimum degree $1$.
Let $u$ be a vertex of degree $1$, and let $v$ be its neighbor.
The graph $G'=G-\{u,v\}$ has order $n(G)-2$ and at most $\Delta-1$ components.
Since adding $uv$ to a uniquely restricted matching in $G'$ yields a uniquely restricted matching in $G$,
the minimality of $G$ implies 
$$\nu_{ur}(G)\geq \nu_{ur}(G')+1\geq \frac{(n(G)-2)-(\Delta-1)}{\Delta}+1=\frac{n(G)-1}{\Delta}.$$
Hence, we may assume that $G$ has minimum degree at least $2$.
Furthermore, in view of Lemma \ref{t4}, we may assume that $G$ is $\Delta$-regular.
If $u$ is an endvertex of a spanning tree of $G$,
then $G-u$ is connected and not $\Delta$-regular,
which implies $\nu_{ur}(G) \geq \nu_{ur}(G') \geq \frac{n(G)-1}{\Delta}$.
This final contradiction completes the proof. 
\end{proof}

\end{document}